\numberwithin{equation}{section}
\newcommand{\stkout}[1]{\ifmmode\text{\sout{\ensuremath{#1}}}\else\sout{#1}\fi}   %%% Strikethrough in math mode
\newtheorem{theorem}{Theorem}[section]
\newtheorem{lemma}[theorem]{Lemma}
\theoremstyle{definition}
\newtheorem{assumption}[theorem]{Assumption}
\theoremstyle{remark}
\title{Large deviations related to the law of the iterated logarithm for It\^{o} diffusions\thanks{We gratefully acknowledge financial support from the Austrian
Science Fund (FWF) under grant P30750.}}
\author{Stefan Gerhold \\ TU Wien \\ sgerhold@fam.tuwien.ac.at
\and Christoph Gerstenecker \\ TU Wien \\ christoph.gerstenecker@fam.tuwien.ac.at}
\begin{document}
	
\maketitle

\begin{abstract}
	When a Brownian motion is scaled according to the law of the iterated logarithm,
	its supremum converges to one as time tends to zero. Upper large deviations of the supremum
	process can be quantified by writing the problem in terms of hitting times and applying
	a result of Strassen (1967) on hitting time densities.
	We extend this to a small-time large deviations principle for the supremum
	of scaled It\^{o} diffusions,
	using as our main tool a refinement of Strassen's result due to Lerche (1986).
\end{abstract}

\section{Introduction and main results}\label{sec:introduction and main results}

For a standard Brownian motion~$W$ and
\begin{align*}
		h(u)
		:= \sqrt{2 u \log\log \frac{1}{u}},
\end{align*}
we have
\[
\limsup_{t\searrow0}\frac{W_t}{h(t)}=
  \lim_{t\searrow0} \sup_{0<u<t} \frac{W_u}{h(u)} = 1 \quad \text{a.s.,}
\]
by Khinchin's law of the iterated logarithm, and there are extensions to the diffusion case. In this note we are not interested in a.s.\ convergence,
but rather in small-time large deviations of the process $\sup_{0<u<t} X_u/h(u)$
for an It\^{o} diffusion~$X$. For Brownian motion, a large deviations
estimate follows from a result of Strassen~\cite{St67},
which gives precise tail asymptotics for the last (or, by time inversion, first) time
at which a Brownian motion hits a smooth curve. For fixed $\varepsilon>0$, it yields
\begin{equation}
  P\left(\sup_{0<u<t} \frac{W_u}{h(u)} \geq \sqrt{1+\varepsilon}\right)
  = e^{-\varepsilon (\log \log \frac1t) (1+o(1))}, 
  \quad t\searrow 0. \label{eq:est intro}
\end{equation}
See Section~\ref{se:bm} for details.
In Theorem~\ref{thm:lerche theorem 4.1.} below, we cite an extension
of Strassen's result due to Lerche~\cite{Le86}, which we will use when extending
the estimate~\eqref{eq:est intro} to It\^{o} diffusions. 
We make the following mild assumptions on our diffusion process.
\begin{assumption}\label{thm:assumptions}
	\begin{enumerate}[(i)]%[label=(A\arabic*)]
	   \item\label{it:sde}
	     The continuous one-dimensional stochastic process $X=(X_t)_{t\geq0}$ satisfies the SDE
      \begin{align*}%\label{eq:ito diffusion}
	      X_t =& \int_{0}^{t} b(X_u, u)\, du + \int_{0}^{t} \sigma(X_u, u)\, dW_u,
      \end{align*}
	    \item\label{it:cont} the coefficients $b$ and  $\sigma$  are continuous functions from
	    $\mathbb{R}\times [0,\infty)$ to $\mathbb R$ with
	    \[
	      \sigma_0 := \sigma(X_0,0) = \sigma(0,0) >0,
	    \]
		\item \label{it:ld assumption for s} the supremum of $ X $ satisfies
		a weak form of
		a small-time large deviations estimate, in the sense that there are $c_1,c_2>0$ such that
			\begin{align}\label{eq:large deviation assumtion}
				P\left(\sup_{0 < u < t} |X_u| \geq c_1\right)
				%= e^{-I(c) / t  (1 + o(1))} \to 0, \quad t \searrow 0, \ c>0,
				=O(t^{c_2}), \quad t \searrow 0,
			\end{align}
			%where the function~$I$ is positive on $(0,\infty).$
	   \item \label{it:lil}
	   the process~$X$ satisfies the small-time law of the iterated logarithm, i.e.,
	   \begin{align*}
	       \limsup_{t\searrow0}\frac{X_t}{h(t)}=   
	      \lim_{ t \searrow 0} \sup_{0<u<t}\frac{X_u}{h(u)} = \sigma_0,\quad \text{a.s.}
	   \end{align*}
	   %\item \label{it:sup cont}
	   %for each $t>0$, the law of $\sup_{u<t}X_u/h(u)$ is absolutely continuous.
	\end{enumerate}
\end{assumption}
By inspecting our proofs (see Lemma~\ref{le:drift}
and~\eqref{eq:use g}), it is not hard to see that
the continuity assumption~\eqref{it:cont} can be slightly weakened.
As for part~\eqref{it:ld assumption for s}, note that it is much weaker than
a classical large deviations estimate, with exponential decay rate. The latter holds, e.g., under the conditions of the
small-noise LDP in~\cite{BaCa11}, by applying Brownian scaling and the contraction principle. For sufficient conditions for the law of the iterated logarithm,
we refer to  p.57 in~\cite{Mc69} and p.11 in~\cite{Ca98}.

\begin{theorem}\label{thm:main result}
  Under  %parts  \eqref{it:sde}--\eqref{it:lil} of
  Assumption~\ref{thm:assumptions},
  the process $\, \sup_{0 < u < t} X_u/h(u)$ satisfies
  a small-time large deviations principle with speed $\log \log (1/t)$ and rate function
  \[
    J(x) := 
    \begin{cases}
          (x/\sigma_0)^2-1 & x  \geq \sigma_0, \\
          \infty & x < \sigma_0.
    \end{cases}
  \]
	This means that
	\begin{align}\label{eq:ldp lower}
		 \liminf_{t \searrow 0} \frac{1}{\log\log\frac{1}{t}} \log P\left( \sup_{0 < u < t} \frac{X_u}{h(u)} \in O \right) &\geq -J(O)
	\end{align}
	for any open set $ O $ and
	\begin{align}\label{eq:ldp upper}
		\limsup_{t \searrow 0} \frac{1}{\log\log\frac{1}{t}} \log P\left( \sup_{0 < u < t} \frac{X_u}{h(u)} \in C \right)
		&\leq -J(C)
	\end{align}
	for any closed set $ C $, where $J(M) := \inf_{x \in M} J(x).$
\end{theorem}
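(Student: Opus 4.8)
The plan is to reduce the large deviations principle for the scaled supremum of the It\^o diffusion $X$ to the known Brownian case by a comparison argument, using the local structure of the SDE near time zero. Since an LDP with a good rate function on the real line is equivalent to the combination of the lower bound \eqref{eq:ldp lower} on open sets and the upper bound \eqref{eq:ldp upper} on closed sets, and since the rate function $J$ is determined by the single-point estimates, it suffices to establish, for every fixed $\varepsilon > 0$,
\begin{align*}
  \lim_{t \searrow 0} \frac{1}{\log\log\frac1t}
    \log P\!\left(\sup_{0 < u < t} \frac{X_u}{h(u)} \geq x\right)
    = -J(x), \qquad x > \sigma_0,
\end{align*}
together with the trivial observation (a consequence of Assumption~\ref{thm:assumptions}\eqref{it:lil}) that the probability tends to a positive constant, hence has rate $0$, when $x < \sigma_0$, and the boundary behaviour at $x = \sigma_0$. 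The upper and lower bounds for general open and closed sets then follow by the standard argument: $J$ is nondecreasing and continuous on $[\sigma_0,\infty)$, so the infimum over a closed set is attained at its left endpoint, and the infimum over an open set is approached from the right where $J$ is finite and continuous.

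First I would treat the drift. By Lemma~\ref{le:drift} (the drift-removal lemma referenced in the excerpt) and the large-deviations-type bound \eqref{eq:large deviation assumtion}, on an event of probability $1 - O(t^{c_2})$ the process $X$ stays in a compact set where $b$ is bounded, so the cumulative drift up to time $u$ is $O(u)$, which is negligible compared to $h(u) = \sqrt{2u\log\log(1/u)}$ as $u \searrow 0$; moreover, $O(t^{c_2})$ is superexponentially small on the scale $\log\log(1/t)$, so this exceptional event cannot affect the rate. Hence we may replace $X$ by its martingale part $M_t = \int_0^t \sigma(X_u,u)\,dW_u$. Next, again using \eqref{eq:large deviation assumtion} and the continuity of $\sigma$ at $(0,0)$ with $\sigma_0 > 0$, for any $\delta > 0$ there is an event of probability $1 - O(t^{c_2})$ on which $\sigma(X_u,u) \in (\sigma_0 - \delta, \sigma_0 + \delta)$ for all $u \le t$; a time-change (Dambis--Dubins--Schwarz) or a direct comparison of the quadratic variation $\langle M\rangle_u$ with $(\sigma_0 \pm \delta)^2 u$ then sandwiches $\sup_{0<u<t} M_u/h(u)$ between expressions of the form $\sup_{0<u<c^\pm t}(\sigma_0 \pm \delta) W_u / h(u)$ up to lower-order corrections coming from the discrepancy between $h(\langle M\rangle_u)$ and $h(u)$, which is again $\log\log$-negligible.

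The Brownian estimate \eqref{eq:est intro} derived in Section~\ref{se:bm} from Lerche's Theorem~\ref{thm:lerche theorem 4.1.}, applied with the barrier $x\,h(u)$, gives
\begin{align*}
  P\!\left(\sup_{0<u<t} \frac{W_u}{h(u)} \geq \frac{x}{\sigma_0 \pm \delta}\right)
    = \exp\!\left(-\Bigl(\bigl(\tfrac{x}{\sigma_0\pm\delta}\bigr)^2 - 1\bigr)\,
      \log\log\tfrac1t\,(1 + o(1))\right),
\end{align*}
so letting $\delta \searrow 0$ after taking $\liminf$ and $\limsup$ recovers the exponent $-J(x) = -((x/\sigma_0)^2 - 1)$ for the diffusion. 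The boundary case $x = \sigma_0$ needs only the one-sided bound implied by the law of the iterated logarithm in Assumption~\ref{thm:assumptions}\eqref{it:lil}, since $J(\sigma_0) = 0$ makes the claimed rate vacuous there. Assembling the pieces, one writes the generic closed set $C$: if $C \cap [\sigma_0,\infty) = \emptyset$ the probability is eventually bounded by the $O(t^{c_2})$-type estimate combined with the LIL, giving rate $-\infty = -J(C)$; otherwise $\inf_C J = J(\min(C \cap [\sigma_0,\infty)))$ and the single-point upper bound suffices by monotonicity. Dually, for open $O$ one picks $x \in O$ with $J(x)$ close to $J(O)$ and applies the single-point lower bound, noting that $\{\,\sup X_u/h(u) \in O\,\} \supseteq \{\,\sup X_u/h(u) \in (x-\eta, x+\eta)\,\}$ and that a lower bound for the latter follows from the difference of two upper-tail estimates together with a complementary argument (or directly from Lerche's density asymptotics, which control the probability of hitting the barrier near a prescribed level).

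The main obstacle I anticipate is the time-change step: controlling the effect of replacing the deterministic clock $u$ by the random clock $\langle M\rangle_u$ inside the slowly varying function $h$, uniformly over $0 < u < t$, and verifying that the resulting multiplicative and additive perturbations are genuinely negligible on the coarse $\log\log(1/t)$ scale rather than merely on finer scales. This requires care because the supremum is taken over a whole time interval shrinking to zero, so one cannot simply evaluate at a single time; a uniform two-sided bound $ (\sigma_0-\delta)^2 u \le \langle M\rangle_u \le (\sigma_0+\delta)^2 u$ on a high-probability event, combined with monotonicity of $h$ and the scaling $h(cu)/h(u) \to \sqrt c$, should suffice, but making this rigorous while tracking the $(1+o(1))$ in the exponent is the delicate part. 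A secondary point is ensuring that the "weak large deviations estimate" \eqref{eq:large deviation assumtion}, with only polynomial decay $O(t^{c_2})$, is indeed strong enough: it is, precisely because $\log(t^{c_2})/\log\log(1/t) \to -\infty$, so any event of that probability contributes rate $-\infty$ and is invisible at this speed — a fact worth stating explicitly since it is what allows the otherwise very mild Assumption~\ref{thm:assumptions}\eqref{it:ld assumption for s} to do its job.
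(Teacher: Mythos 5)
Your overall route for deducing the LDP coincides with the paper's: the paper proves Theorem~\ref{thm:main result} by quoting the one-parameter tail asymptotics of Theorem~\ref{thm:main eps}, obtaining the open-set bound~\eqref{eq:ldp lower} from the difference of two tail probabilities over a small interval squeezed into $O$ near $\inf O$, and the closed-set bound~\eqref{eq:ldp upper} from the tail estimate at the left endpoint of $C\cap[\sigma_0,\infty)$ — exactly your "standard argument". However, your treatment of the region below $\sigma_0$ has a genuine gap. For a closed set $C$ with $C\cap[\sigma_0,\infty)=\emptyset$ (or for the sub-$\sigma_0$ part of a general closed set) you claim rate $-\infty$ from "the $O(t^{c_2})$-type estimate combined with the LIL". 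Neither ingredient delivers this: \eqref{eq:large deviation assumtion} controls the event $\{\sup_{0<u<t}|X_u|\geq c_1\}$, which is unrelated to $\{\sup_{0<u<t}X_u/h(u)\leq \sigma_0-\eta\}$, and the law of the iterated logarithm by itself only gives $P\to 0$ with no rate, whereas \eqref{eq:ldp upper} with $J(C)=\infty$ requires decay faster than every power of $\log(1/t)$. The missing (one-line) observation, which the paper uses, is that $t\mapsto \sup_{0<u<t}X_u/h(u)$ is nondecreasing and converges to $\sigma_0$ as $t\searrow 0$ by part~\eqref{it:lil} of Assumption~\ref{thm:assumptions}, so its value at every fixed $t$ is $\geq\sigma_0$ a.s.\ and the probability in question is exactly zero.

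A secondary issue is that, instead of citing Theorem~\ref{thm:main eps} (which is available and is precisely the pointwise input needed), you re-derive the tail estimate, and this sketch is not sound as written. The claim that $\sigma(X_u,u)\in(\sigma_0-\delta,\sigma_0+\delta)$ for all $u\leq t$ on an event of probability $1-O(t^{c_2})$ does not follow from Assumption~\ref{thm:assumptions}\eqref{it:ld assumption for s} together with continuity of $\sigma$ at $(0,0)$: the assumption quantifies exceedance only of the fixed level $c_1$, while closeness of $\sigma(X_u,u)$ to $\sigma_0$ requires $|X_u|$ below a possibly much smaller level, for which no polynomial bound is assumed (it can be obtained by an additional exponential-martingale argument, but you would have to supply it). Moreover, the step you yourself flag as the delicate one — replacing the random clock $\langle M\rangle_u$ by a deterministic clock inside $h$, uniformly over $0<u<t$ — is left open; note that a two-sided bound $(\sigma_0-\delta)^2u\leq\langle M\rangle_u\leq(\sigma_0+\delta)^2u$ does not by itself sandwich $\widehat W_{\langle M\rangle_u}$, since $\widehat W$ is not monotone. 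The paper's Lemma~\ref{le:tc bm} resolves this differently, bounding $|\widehat W_{\langle X\rangle_u}-\widehat W_{\sigma_0^2u}|$ via L\'evy's modulus of continuity (Theorem~\ref{thm:levy}) and absorbing the resulting additive $o(1)$ into the perturbation $d(t)$ allowed in Theorem~\ref{thm:bm with o(1)}. The cleanest repair of your write-up is therefore to invoke Theorem~\ref{thm:main eps} directly and keep only your open/closed-set deduction, supplemented by the monotonicity remark above.
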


Obviously, $J$ is a \emph{good rate function} in the sense of~\cite{DeZe98},
i.e.\ the level sets $\{J\leq c\},$ $c\in\mathbb R,$ are compact.
The main estimate needed to prove Theorem~\ref{thm:main result} is contained in the following result.%,
%which needs only parts \eqref{it:sde}--\eqref{it:ld assumption for s} of
	%Assumption~\ref{thm:assumptions}.

\begin{theorem}\label{thm:main eps}
	Under parts \eqref{it:sde}--\eqref{it:ld assumption for s} of
	Assumption~\ref{thm:assumptions}, for $\varepsilon>0$ we have
	\begin{align*}
		P\left( \sup_{0< u < t}\frac{X_u}{h(u)} \geq \sigma_0\sqrt{1 + \varepsilon} \right)
		&= e^{- \varepsilon (\log\log\frac{1}{t})(1 + o(1))} %\label{eq:ld with 1+eps}
		\\
		&= \Big(\log \frac1t\Big)^{- \varepsilon + o(1)}, \quad t \searrow 0.
	\end{align*}
\end{theorem}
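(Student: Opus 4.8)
The plan is to sandwich the It\^o diffusion $X$ between two Brownian motions with constant volatility on a small time interval, so that the known Brownian estimate \eqref{eq:est intro} — which is really a consequence of Strassen's/Lerche's hitting-time result — transfers to $X$ with only an $o(1)$ loss in the exponent. First I would fix a small cutoff level: by Assumption \ref{thm:assumptions}\eqref{it:ld assumption for s}, outside an event of probability $O(t^{c_2})$ we have $\sup_{0<u<t}|X_u|\le c_1$, and $O(t^{c_2})$ is negligible on the scale $\log\log(1/t)$ since $t^{c_2}=e^{-c_2\log(1/t)}$ decays far faster than $(\log(1/t))^{-\varepsilon}$. On that event, continuity of $\sigma$ (Assumption \ref{thm:assumptions}\eqref{it:cont}) gives, for any $\delta>0$, a $t_\delta>0$ such that $\sigma_0-\delta\le \sigma(X_u,u)\le\sigma_0+\delta$ for all $u\le t_\delta$; similarly the drift $b$ is bounded on the relevant compact set, so $\big|\int_0^t b(X_u,u)\,du\big|\le K t$, which is $o(h(t))$ and hence also negligible after dividing by $h(u)$ (this is the content of the cited Lemma \ref{le:drift} and \eqref{eq:use g}).

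The core step is then a time-change / comparison argument. Writing $M_t:=\int_0^t\sigma(X_u,u)\,dW_u$, the Dambis--Dubins--Schwarz theorem represents $M$ as $B_{\langle M\rangle_t}$ for a Brownian motion $B$, where on the good event $(\sigma_0-\delta)^2 u\le \langle M\rangle_u\le(\sigma_0+\delta)^2 u$. Controlling $\sup_{0<u<t} M_u/h(u)$ from above by $\sup$ over a Brownian path run on a slightly stretched clock, and using that $h$ is (eventually) increasing and slowly varying so that $h((\sigma_0\pm\delta)^2 u)/h(u)\to \sigma_0\pm\delta$, one gets
\begin{align*}
  P\Big(\sup_{0<u<t}\tfrac{X_u}{h(u)}\ge \sigma_0\sqrt{1+\varepsilon}\Big)
  \le P\Big(\sup_{0<u<t}\tfrac{B_u}{h(u)}\ge \tfrac{\sigma_0}{\sigma_0+\delta}\sqrt{1+\varepsilon}-o(1)\Big)+O(t^{c_2}),
\end{align*}
and analogously a lower bound with $\sigma_0+\delta$ replaced by $\sigma_0-\delta$ and the inequality reversed. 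Now I invoke the Brownian estimate \eqref{eq:est intro}: for a level $\sqrt{1+\eta}$ it gives decay $(\log\log(1/t))\cdot\eta\cdot(1+o(1))$ in the exponent, equivalently $(\log(1/t))^{-\eta+o(1)}$. Here $\sqrt{1+\varepsilon}\cdot\sigma_0/(\sigma_0+\delta)=\sqrt{1+\eta_\delta}$ with $\eta_\delta\to\varepsilon$ as $\delta\to0$, so taking first $t\searrow0$ and then $\delta\searrow0$ pins down the exponent as exactly $-\varepsilon$; the lower bound works the same way with $\eta_\delta\to\varepsilon$ from the other side. Passing from the Brownian statement \eqref{eq:est intro}, which is stated for a \emph{fixed} $\varepsilon$, to the perturbed levels $\sqrt{1+\eta_\delta}$ is harmless because one may apply it for each fixed $\delta$ separately before sending $\delta\to0$.

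The main obstacle I anticipate is making the comparison between $\sup_{0<u<t} X_u/h(u)$ and a Brownian analogue fully rigorous near $u=0$, since the normalization $h(u)$ blows up (relative to $u$) as $u\to0$ and the time-changed clock $\langle M\rangle_u$ only sandwiches $u$ up to constants, not $1+o(1)$. The trick is that $h$ is slowly varying at $0$, so a constant-factor distortion of the clock costs only a constant factor $\sigma_0\pm\delta$ in the ratio, not more — but one has to handle the supremum over the whole shrinking interval uniformly, not just at the single time $t$, e.g.\ by discretizing into dyadic blocks $[2^{-n-1}t,2^{-n}t]$ and using the monotonicity of $h$ within each block, or by directly quoting the hitting-time viewpoint: $\{\sup_{0<u<t}X_u/h(u)\ge a\}$ is an event about whether the path of $X$ crosses the curve $u\mapsto a\,h(u)$ before time $t$, and curve-crossing is exactly what Lerche's Theorem \ref{thm:lerche theorem 4.1.} governs. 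I expect the cleanest route is to run the whole argument in the hitting-time picture for the Brownian bound and only use the sandwiching of $X$ by $(\sigma_0\pm\delta)W+(\text{drift})$ on $[0,t_\delta]$, so that the single nontrivial limiting input is \eqref{eq:est intro}/Lerche and everything else is elementary asymptotics of $h$.
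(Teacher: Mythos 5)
Your overall strategy is sound, and it genuinely differs from the paper in the central comparison step. The paper also reduces to the event $\{\sup_{0<u<t}|X_u|<c_1\}$, controls the drift exactly as you do (Lemma~\ref{le:drift}), and applies Dambis--Dubins--Schwarz; but it then compares $\widehat{W}_{\langle X\rangle_u}$ with $\widehat{W}_{\sigma_0^2u}$ \emph{additively}, using L\'evy's modulus of continuity to show that the normalized difference is a deterministic $o(1)$ (see \eqref{eq:use g} and Lemma~\ref{le:tc bm}). This produces levels of the form $\sigma_0\sqrt{1+\varepsilon}+d(t)$ with $d(t)=o(1)$, which is precisely why the paper proves Theorem~\ref{thm:bm with o(1)} (via Lerche's uniform Theorem~\ref{thm:lerche theorem 4.1.}) instead of quoting \eqref{eq:est intro} directly. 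You instead keep a \emph{multiplicative} $\delta$-sandwich of the clock, $(\sigma_0-\delta)^2u\le\langle M\rangle_u\le(\sigma_0+\delta)^2u$, and use monotonicity of $h$ together with $h(cu)\sim\sqrt{c}\,h(u)$; for the upper bound one dominates $B_{\langle M\rangle_u}$ by the running maximum of $B$ up to time $(\sigma_0+\delta)^2u$, and for the lower bound one uses that the time change maps $(0,t)$ onto an interval containing $(0,(\sigma_0-\delta)^2t)$ with inverse $\tau(s)\le s/(\sigma_0-\delta)^2$ — so the uniformity worry in your last paragraph resolves itself without dyadic blocks. What this buys: only the fixed-level Brownian estimate \eqref{eq:est intro} is needed (the deterministic $o(1)$ errors from the drift and from the slowly varying correction are absorbed by monotonicity in the level before sending $\delta\searrow0$), so the $o(1)$-perturbed-level machinery can be bypassed, at the price of a two-parameter limit ($t\searrow0$, then $\delta\searrow0$).

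Two points need repair or at least explicit flagging. First, ``on the event $\sup_{0<u<t}|X_u|\le c_1$, continuity gives $\sigma_0-\delta\le\sigma(X_u,u)\le\sigma_0+\delta$ for $u\le t_\delta$'' is not correct as stated: $c_1$ is the \emph{fixed} constant from Assumption~\ref{thm:assumptions}\eqref{it:ld assumption for s}, and the good event does not force $X_u$ to be close to $0$, so $\sigma(X_u,u)$ need not be close to $\sigma_0=\sigma(0,0)$. Your scheme genuinely needs $\delta$ arbitrarily small (otherwise the exponent you extract is $(1+\varepsilon)\sigma_0^2/(\sigma_0\pm\delta)^2-1$, which does not converge to $\varepsilon$), so you must say how you obtain this — e.g.\ by assuming, or verifying, that \eqref{eq:large deviation assumtion} holds with $c_1$ replaced by arbitrarily small constants; the paper's own proof leans on the analogous point when asserting $g(u)=o(1)$, so be explicit here rather than inheriting the gloss. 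Second, the closing shorthand of ``sandwiching $X$ by $(\sigma_0\pm\delta)W+(\text{drift})$'' is not literally valid: a stochastic integral whose integrand lies in $[\sigma_0-\delta,\sigma_0+\delta]$ is not pathwise between $(\sigma_0-\delta)W$ and $(\sigma_0+\delta)W$. The correct formalization is exactly the DDS clock comparison that is already your main argument, so the final write-up should rely on that and drop the pathwise-sandwich phrasing.
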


After some preparations, the proofs of Theorems~\ref{thm:main result} and~\ref{thm:main eps}
are given at the end of Section~\ref{se:main proofs}.

\section{Brownian motion}\label{se:bm}

We can quickly see that there are positive constants $\gamma_1,\gamma_2$ (depending
on~$\varepsilon$) such that
\begin{equation}\label{eq:c1 c2}
   e^{-\gamma_1 (\log \log \frac1t)(1+o(1))} \leq
   P\left(\sup_{0<u<t} \frac{W_u}{h(u)} \geq \sqrt{1+\varepsilon}\right)
   \leq e^{-\gamma_2 (\log \log \frac1t)(1+o(1))}, \quad t\searrow0.
\end{equation}
As for the lower estimate, note that~$h(u)$ increases for small $u>0$, and thus
\[
  P\left(\sup_{0<u<t} \frac{|W_u|}{h(u)} \geq \sqrt{1+\varepsilon}\right)
  \geq
  P\left(\sup_{0<u< t}|W_u|\geq \sqrt{1+\varepsilon}\, h(t)\right),
  \quad t\text{ small.}
\]
{}From this and the reflection principle, it is very easy to see that we can take $\gamma_1=\varepsilon+1$
in~\eqref{eq:c1 c2}.
%\begin{align*}
%    P\left(\sup_{0<u<t} \frac{W_u}{h(u)} \geq \sqrt{1+\varepsilon}\right)
%    &\geq\frac12 P\left(\sup_{0<u<t} \frac{|W_u|}{h(u)} \geq \sqrt{1+\varepsilon}\right) \\  
%    &\geq \frac12 P\left(\sup_{0<u< t}|W_u|\geq \sqrt{1+\varepsilon}\, h(t)\right) \\
%    &\geq \frac12 P\left(\sup_{0<u< t}W_u\geq \sqrt{1+\varepsilon}\, h(t)\right) \\
%    &=  P\left(W_t\geq \sqrt{1+\varepsilon}\, h(t)\right) \\
%    &=  P\left(\frac{W_t}{\sqrt{t}}\geq \sqrt{1+\varepsilon}\sqrt{2 \log \log\frac1t}\right) \\
%    &=e^{-(\varepsilon+1) (\log \log \frac1t) (1+o(1))}, \quad
%    t\searrow0.
%\end{align*}
The upper estimate in~\eqref{eq:c1 c2} follows from applying the Borell inequality (Theorem~D.1 in~\cite{Pi96})
to the centered Gaussian process $(W_u/h(u))_{0<u<t},$ but neither
of these estimates is sharp.
To get the optimal constants $\gamma_1=\gamma_2=\varepsilon$, we use a result of Strassen~\cite{St67}.
By time inversion, we have
\begin{align}
  P\left(\sup_{0<u<t} \frac{W_u}{h(u)} \geq \sqrt{1+\varepsilon}\right)
  &= P\big( \inf\{u: W_u \geq \sqrt{1+\varepsilon}\, h(u) \} \leq t\big) \notag \\
  &= P\left( \sup\{v: W_v \geq \sqrt{1+\varepsilon}\, vh(1/v) \} \geq\frac{1}{t}\right). \notag
\end{align}
Define $\varphi(v)=\sqrt{1+\varepsilon}\, vh(1/v)$. Then, by Theorem~1.2 of~\cite{St67},
the random variable $\sup\{v: W_v \geq \varphi(v) \}$ has a density $D_\varphi(s)$ (except possibly for some
mass at zero, which is irrelevant for our asymptotic estimates), which satisfies
\[
   D_\varphi(s) \sim \varphi'(s) (2\pi s)^{-1/2}\exp\big({-\varphi(s)^2/2s}\big),
   \quad s\nearrow \infty.
\]
{}From this, the estimate~\eqref{eq:est intro} easily follows, very similarly as in the proof
of Theorem~\ref{thm:bm with o(1)} below. That theorem strengthens~\eqref{eq:est intro}, replacing $\varepsilon$  by some quantity that converges to $\varepsilon$. To prove it,
we apply the following theorem due to Lerche:
\begin{theorem}[Theorem 4.1 in~\cite{Le86}, p.60]\label{thm:lerche theorem 4.1.}
	Let $ T_a := \inf\{ u > 0 : W_u \geq \psi_a(u) \} $ for some positive, increasing, continuously
	differentiable function $u\mapsto \psi_a(u),$ which depends on a positive
	parameter~$a$. Assume that
	there are $ 0 < t_1 \leq \infty $ and $ 0 < \alpha < 1 $ such that
	\begin{enumerate}[(i)]
		\item\label{it:lerche -- convergence} $ P(T_a < t_1) \to 0 $ as $ a \nearrow \infty $,
		
		\item\label{it:lerche -- monotonicity} $ \psi_a(u) / u^{\alpha} $ is monotone decreasing in $ u $ for each $ a $,
		
		\item\label{it:lerche -- continuity} for every $ \varepsilon > 0 $ there exists a $ \delta > 0 $ such that for all $ a $
		\begin{align*}
			\left|\frac{\psi_a'(s)}{\psi_a'(u)}   - 1\right| < \varepsilon \quad\text{if}\quad \left|\frac{s}{u} - 1\right| < \delta,
		\end{align*}
		for $ s,u \in (0, t_1) $.
	\end{enumerate}
	Then the density of~$T_a$ satisfies
	\begin{equation}\label{eq:lerche dens}
		p_a(u)
		= \frac{\Lambda_a(u)}{u^{3/2}} n\Big(\frac{\psi_a(u)}{\sqrt{u}}\Big)\left(1 + o(1)\right)
	\end{equation}
	uniformly on $ (0, t_1) $ as $ a \nearrow \infty $. Here, $ n $ is the Gaussian density
	\begin{align*}
		n(x)
		= \frac{1}{\sqrt{2\pi}}e^{-x^2/2},
	\end{align*}
	and $ \Lambda_a $ is defined by
	\begin{align*}
		\Lambda_a(u)
		:= \psi_a(u) - u \psi_a'(u).
	\end{align*}
\end{theorem}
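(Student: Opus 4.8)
This is a boundary-crossing result of Lerche; we only outline the idea and refer
to~\cite{Le86} for the details. The plan is to reduce it to the classical \emph{tangent
approximation}. Recall the Bachelier--L\'evy formula: the first passage time of~$W$ to a line
$s\mapsto c+bs$ with $c>0$ has density $g(s)=c\,s^{-3/2}n\big((c+bs)/\sqrt s\big)$. Fix
$u\in(0,t_1)$ and let $\ell_u(s):=\psi_a(u)+\psi_a'(u)(s-u)$ be the tangent to~$\psi_a$ at~$u$;
its intercept is $\ell_u(0)=\psi_a(u)-u\psi_a'(u)=\Lambda_a(u)$, which is strictly positive
because~\eqref{it:lerche -- monotonicity} forces $\psi_a'(u)\le\alpha\psi_a(u)/u<\psi_a(u)/u$
(here $\alpha<1$), and in fact the same bound gives $\Lambda_a(u)\ge(1-\alpha)\psi_a(u)$.
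Evaluating $g$ for the line~$\ell_u$ at time~$u$ gives precisely
$\Lambda_a(u)u^{-3/2}n(\psi_a(u)/\sqrt u)$, the right-hand side of~\eqref{eq:lerche dens}. So the
task is to show that the true first-passage density~$p_a(u)$ equals the first-passage density of
the tangent line at~$u$ up to a factor $1+o(1)$, uniformly in~$u$ as $a\nearrow\infty$.

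The main tool is the Volterra integral equation obtained by conditioning on~$T_a$: with
$\phi_a(t):=t^{-1/2}n\big(\psi_a(t)/\sqrt t\big)$ denoting the density of~$W_t$ at the level
$\psi_a(t)$, the strong Markov property gives
\begin{equation*}
  \phi_a(t)=\int_0^t p_a(s)\,\frac{1}{\sqrt{2\pi(t-s)}}\,
    \exp\!\Big(-\frac{(\psi_a(t)-\psi_a(s))^2}{2(t-s)}\Big)\,ds ,
\end{equation*}
which one would solve asymptotically by a Laplace-type argument. To see the mechanism, take a
\emph{linear} boundary $\psi_a(r)=c+br$: then $p_a(s)=c\,s^{-3/2}n((c+bs)/\sqrt s)$ by
Bachelier--L\'evy, and inserting this the integrand's exponent equals
$\tfrac{\psi_a(t)^2}{2t}+\tfrac{c^2}{2}\big(\tfrac1s-\tfrac1t\big)$, hence is minimal at $s=t$
and exceeds $\psi_a(t)^2/(2t)$ by $\approx c^2(t-s)/(2t^2)$ nearby; together with the integrable
singularity $(t-s)^{-1/2}$ this localizes the integral to an $s$-window of width $\sim t^2/c^2$
around~$t$ and, by a short computation, returns exactly~$\phi_a(t)$. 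The curved case then follows
from the tangent approximation $\psi_a(t)-\psi_a(s)=\int_s^t\psi_a'(r)\,dr=\psi_a'(t)(t-s)(1+o(1))$
on that shrinking window, which is where one uses the uniform continuity~\eqref{it:lerche -- continuity}
of~$\psi_a'$ (the window does eventually fall inside a multiplicative $\delta$-neighbourhood of~$t$
because $t^2/\Lambda_a(t)^2\to0$), while~\eqref{it:lerche -- monotonicity}, a concavity surrogate,
makes the contribution of~$s$ bounded away from~$t$ negligible; finally~\eqref{it:lerche -- convergence},
i.e.\ $\int_0^{t_1}p_a(s)\,ds=P(T_a<t_1)\to0$, provides the smallness needed to invert the
first-kind Volterra equation by a uniformly convergent iteration.

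The main obstacle I anticipate is exactly the \emph{uniformity}: every approximation error must be
$o(1)$ simultaneously in $u\in(0,t_1)$ and as $a\nearrow\infty$, which forces one to track how the
window width $t^2/\Lambda_a(t)^2$, the tangent error controlled by~\eqref{it:lerche -- continuity},
and the Volterra iteration remainder all shrink together over the whole range of~$u$. Producing the
two-sided matching---an upper bound exploiting that, before~$u$, the path stays below a
nearly-concave boundary, and a lower bound from the integral equation---with explicit uniform
remainders is the bulk of the work.
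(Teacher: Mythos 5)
This theorem is not proved in the paper at all: it is quoted verbatim from Lerche's monograph (Theorem 4.1 in~\cite{Le86}), so there is no internal argument to compare your sketch against, and deferring the details to Lerche puts you at exactly the same level of rigour as the paper itself. Within that limitation, the outline you give is the right one (the tangent approximation), and the concrete claims you do commit to check out: condition (ii) yields $\psi_a'(u)\le\alpha\psi_a(u)/u$, hence $\Lambda_a(u)\ge(1-\alpha)\psi_a(u)>0$; and the Bachelier--L\'evy first-passage density of the tangent line at~$u$, evaluated at time~$u$, is precisely $\Lambda_a(u)\,u^{-3/2}\,n\big(\psi_a(u)/\sqrt{u}\big)$, i.e.\ the right-hand side of~\eqref{eq:lerche dens}. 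One normalization slip: for the Laplace window to lie in a multiplicative $\delta$-neighbourhood of~$t$ you need $t/\Lambda_a(t)^2\to0$ (window width $\asymp t^2/\Lambda_a(t)^2$ divided by~$t$), not $t^2/\Lambda_a(t)^2\to0$, which can fail for large~$t$ when $t_1=\infty$; the correct statement does follow uniformly from condition (i), since $P(T_a<t_1)\ge\sup_{t<t_1}P(W_t\ge\psi_a(t))$ forces $\psi_a(t)/\sqrt{t}\to\infty$ uniformly on $(0,t_1)$ and then $\Lambda_a\ge(1-\alpha)\psi_a$ gives $t/\Lambda_a(t)^2\to0$ uniformly. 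Beyond that, the real content of the theorem --- the uniformity in $u\in(0,t_1)$ of every error term in the Volterra/tangent comparison, which you correctly identify as the bulk of the work --- is not carried out in your text; as a self-contained proof it is therefore incomplete, but as an account of a cited external result it is consistent with how the paper uses the statement.
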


We can now prove the following variant of Theorem~\ref{thm:main eps},
where~$X$ is specialized to Brownian motion, but $\varepsilon$
is generalized to $\varepsilon+o(1)$.

\begin{theorem}\label{thm:bm with o(1)}
  Let $d(t)$ be a deterministic function with $d(t)=o(1)$ as $t\searrow0$. Then,
  for $\varepsilon>0,$
  \begin{equation}\label{eq:bm with o(1}
     P\left( \sup_{0 < u < t} \frac{ W_{u} }{h(u)} \geq  \sqrt{1 + \varepsilon + d(t)}\right)
     = e^{-\varepsilon (\log \log \frac1t) (1+o(1))},   \quad t\searrow 0.
  \end{equation}
\end{theorem}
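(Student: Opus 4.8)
The plan is to apply Lerche's Theorem~\ref{thm:lerche theorem 4.1.} with the family of boundaries
\[
  \psi_a(u) := \sqrt{1+\varepsilon+d(1/a)}\, u\, h(a) \quad\text{(with }a = 1/t\text{),}
\]
or, more precisely, to reparametrize as in the Strassen computation above: by time inversion,
\(
  P\big(\sup_{0<u<t} W_u/h(u) \geq \sqrt{1+\varepsilon+d(t)}\big)
  = P\big(T_a < \infty \text{ and } \sup\{v: W_v \geq \varphi_a(v)\}\geq 1/t\big),
\)
where \(\varphi_a(v) := \sqrt{1+\varepsilon+d(1/a)}\, v\, h(1/v)\). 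To cast this in hitting-time form we run the argument at fixed \(a\): we want the density of the first hitting time of the curve \(v\mapsto\varphi_a(v)\) by \(W\), and then integrate it from \(1/t\) to \(\infty\). The parameter \(a\) enters only through the constant \(\sqrt{1+\varepsilon+d(1/a)}\to\sqrt{1+\varepsilon}\), so the three hypotheses of Lerche's theorem should be checked for the normalized curve \(\varphi_a\) on an interval \((0,t_1)\) (in the inverted time variable, a neighborhood of \(\infty\)); hypothesis~\eqref{it:lerche -- monotonicity} holds because \(h(1/v)/v^{\alpha-1}\) is eventually monotone for \(\alpha\) close to~1, hypothesis~\eqref{it:lerche -- continuity} holds because \(\varphi_a'\) factors as the \(a\)-independent function times the fixed constant, and hypothesis~\eqref{it:lerche -- convergence} is the statement that the probability in question tends to zero, which follows from the crude Borell/reflection bounds in~\eqref{eq:c1 c2}.

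Once Lerche's asymptotic~\eqref{eq:lerche dens} is in force, the key step is to evaluate the integral
\[
  P\Big(\sup\{v: W_v\geq\varphi_a(v)\}\geq \tfrac1t\Big) = \int_{1/t}^{\infty} p_a(v)\,dv
  = \int_{1/t}^{\infty} \frac{\Lambda_a(v)}{v^{3/2}}\, n\Big(\frac{\varphi_a(v)}{\sqrt v}\Big)(1+o(1))\,dv.
\]
Here \(\varphi_a(v)^2/v = (1+\varepsilon+d(1/a))\, v\, h(1/v)^2 = (1+\varepsilon+d(1/a))\,\cdot 2\log\log v\), so the Gaussian factor is \((\log v)^{-(1+\varepsilon+d)}\) up to constants, and \(\Lambda_a(v)/v^{3/2}\) contributes only polylogarithmic and polynomial-in-\(1/v\) corrections (one computes \(\Lambda_a(v) = \varphi_a(v) - v\varphi_a'(v)\) explicitly from \(h\)). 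A Laplace-type / dominated-convergence estimate on the tail integral then yields that \(\int_{1/t}^\infty p_a\,dv\) behaves like \((\log(1/t))^{-(\varepsilon+d(t))(1+o(1))} = (\log(1/t))^{-\varepsilon+o(1)}\), which is exactly \(e^{-\varepsilon(\log\log\frac1t)(1+o(1))}\) after absorbing \(d(t)=o(1)\) into the \(o(1)\) in the exponent. The bookkeeping that the \(o(1)\) in Lerche's uniform statement and the \(o(1)\) coming from \(d\) combine correctly is the routine-but-delicate part.

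The main obstacle I anticipate is verifying hypothesis~\eqref{it:lerche -- continuity} of Lerche's theorem \emph{uniformly in \(a\)} near the endpoint of the time interval, together with making the interval \((0,t_1)\) (equivalently, the neighborhood of \(\infty\) in inverted time) explicit enough that the tail integral \(\int_{1/t}^\infty\) actually lies inside the region where~\eqref{eq:lerche dens} holds. Because \(\varphi_a'\) is the product of a fixed function of \(v\) and a constant depending on \(a\), the ratio \(\varphi_a'(s)/\varphi_a'(u)\) is \emph{independent} of \(a\), so this reduces to an equicontinuity statement for a single function — essentially that \(h'\) varies slowly — but one must still confirm that the relevant modulus-of-continuity condition survives the time inversion and holds on a half-line \((0,t_1)\) with \(t_1\) possibly equal to \(+\infty\). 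A secondary technical point is the harmless ``mass at zero'' of the last-hitting-time distribution, and the matching lower bound, which comes from restricting the tail integral to a bounded window \([1/t, C/t]\) and bounding the integrand below there; both are handled exactly as in the Brownian computation sketched after~\eqref{eq:est intro}.
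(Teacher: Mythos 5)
Your overall architecture (reduce to a boundary-crossing problem, invoke Lerche's Theorem~\ref{thm:lerche theorem 4.1.}, then estimate a tail integral whose Gaussian factor is $(\log v)^{-(1+\varepsilon+d)}$) matches the paper's, and your final integral asymptotics is essentially the computation in the paper. But the way you set up the application of Lerche's theorem has a genuine gap, and it is exactly the point you flag at the end as a ``technical concern'': in your time-inverted parametrization the curves $\varphi_a(v)=\sqrt{1+\varepsilon+d(1/a)}\,v\,h(1/v)=\sqrt{1+\varepsilon+d(1/a)}\sqrt{2v\log\log v}$ converge to a \emph{fixed} curve as $a\nearrow\infty$, while the time window $(1/t,\infty)=(a,\infty)$ escapes to infinity. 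Lerche's theorem gives the first-passage density uniformly in $u\in(0,t_1)$ \emph{as the parameter $a$ tends to infinity}, under hypothesis~\eqref{it:lerche -- convergence} that $P(T_a<t_1)\to0$. For your family this hypothesis fails for every admissible $t_1$: the hitting probability of the (essentially fixed) curve is bounded away from $0$ uniformly in $a$ (indeed $\varphi_a(v)\to0$ as $v\searrow e$, so the curve is hit with probability bounded below), and with $t_1<\infty$ the integration range $(a,\infty)$ lies outside $(0,t_1)$ anyway. What you would actually need is uniformity of the density asymptotics as \emph{time} tends to infinity, uniformly over the family of curves --- i.e.\ a uniform version of Strassen's result --- which is not what the quoted theorem supplies. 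A secondary but real error: the event $\{\sup_{0<u<t}W_u/h(u)\geq q(t)\}$ corresponds, after inversion, to the \emph{last} hitting time of $\varphi_a$ exceeding $1/t$ (Strassen's object); integrating the \emph{first}-passage density over $(1/t,\infty)$, as you propose, represents a strictly smaller event and so does not even give the correct probability to estimate.

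The missing idea in the paper is the Brownian-scaling step \eqref{eq:fuer lerche}--\eqref{eq:def psi}: substituting $s=au$ and rescaling space by $\sqrt a$ turns the small-time problem into a first-passage problem \emph{before the fixed time $1$} for the boundary $\psi_a(u)=q(1/a)\sqrt a\,h(u/a)=q(1/a)\sqrt{2u\log\log(a/u)}$, so that $a=1/t$ is genuinely Lerche's large parameter, the integration range $(0,1)$ sits entirely inside the uniformity region, and hypothesis~\eqref{it:lerche -- convergence} becomes $P(T_a<1)\to0$, which the paper proves in Lemma~\ref{le:lerche} via the law of the iterated logarithm and monotonicity in $a$ (your appeal to the crude bounds of \eqref{eq:c1 c2} would serve the same purpose in this rescaled setting, but not in yours, where $P(T_a<\infty)\not\to0$). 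If you want to keep your time-inversion picture without any uniform hitting-time result, a cleaner repair is a sandwich argument: for small $t$ one has $\varepsilon-\delta\leq\varepsilon+d(t)\leq\varepsilon+\delta$, so by monotonicity in the threshold the probability is squeezed between the two fixed-threshold estimates of type \eqref{eq:est intro}, and letting $\delta\searrow0$ gives the claim; but as written, your invocation of Theorem~\ref{thm:lerche theorem 4.1.} does not go through.
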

\begin{proof}
  We put
  \begin{equation}\label{eq:def q}
    q(t) := \sqrt{1+\varepsilon+d(t)}
  \end{equation}
  and $a=1/t,$ to make the notation similar to~\cite{Le86}.
  We can write the probability in~\eqref{eq:bm with o(1} as a boundary crossing probability,
	\begin{align}
		P\left( \sup_{0 < u < t} \frac{ W_{u}}{h(u)} \geq q(t)\right)
		&= P\left(\inf \left\{ u > 0 : W_u \geq q(1/a)  h(u)  \right\}   < \frac{1}{a}  \right) \notag\\
		&= P\left(\inf \left\{ au > 0 : W_u \geq q(1/a)  h(u)  \right\}   < 1  \right) \notag\\
		&= P\left(\inf \left\{ s > 0 : W_{s/a} \geq q(1/a)  h(s/a)  \right\}   < 1  \right) \notag\\
		&= P\left(\inf \left\{ s > 0 : \sqrt{a}W_{s/a} \geq q(1/a)  \sqrt{a}h(s/a)  \right\}   < 1  \right) \notag\\
		 &= P\left(\inf \left\{ s > 0 : W'_{s} \geq q(1/a)  \sqrt{a}h(s/a)  \right\}   < 1  \right), \label{eq:fuer lerche}
\end{align}
	where $ W' $ is again a Brownian motion, using the scaling property.	
	We will verify in Lemma~\ref{le:lerche} below that the function
    \begin{align}\label{eq:def psi}
		\psi_a(u)
		:= q(1/a) \sqrt{a}h(u/a)
	\end{align}
	satisfies the assumptions of Theorem~\ref{thm:lerche theorem 4.1.}.
  By~\eqref{eq:fuer lerche} and the uniform estimate~\eqref{eq:lerche dens}, we thus obtain
  \[
    P\left( \sup_{0 < u < t} \frac{ W_{u}}{h(u)} \geq q(t)\right)
    \sim
    \int_0^1 \frac{\Lambda_a(u)}{u^{3/2}} n\Big(\frac{\psi_a(u)}{\sqrt{u}}\Big)du,
    \quad a=\frac1t \nearrow \infty.
  \]
    An easy calculation shows that
    \[
      \Lambda_a(u) \sim  \mathit{const} \cdot \sqrt{u \log \log \frac{a}{u}},
    \]
    uniformly in $u\in(0,1),$ and so
    \begin{align*}
      \int_0^1 \frac{\Lambda_a(u)}{u^{3/2}} n\Big(\frac{\psi_a(u)}{\sqrt{u}}\Big)du
      &\sim \mathit{const}\cdot \int_0^1 \frac{1}{u}\sqrt{\log \log \frac{a}{u}} \Big(\log \frac{a}{u}
        \Big)^{-(1+\varepsilon+d(t))} du\\
      &= \mathit{const}\cdot \int_a^\infty \frac{1}{x} \sqrt{\log \log x}\,
        (\log x)^{-(1+\varepsilon+d(t))} dx \\
      &= \mathit{const}\cdot \int_a^\infty \frac{1}{x} 
        (\log x)^{-(1+\varepsilon+o(1))} dx \\
      &= \mathit{const}\cdot (\log a)^{-\varepsilon+o(1)}
      = e^{-\varepsilon(\log \log \frac1t)(1+o(1))}.
    \end{align*}
    As for the third line, note that
    \[
      \log \log x = (\log x)^{\frac{\log \log \log x}{\log \log x}},
    \]
    and that the exponent is $o(1)$ for $x\geq a$ and $a\nearrow\infty$.
\end{proof}

\begin{lemma}\label{le:lerche}
   The function $\psi_a$ defined in~\eqref{eq:def psi}, with~$q$ defined in~\eqref{eq:def q},
   satisfies the assumptions of Theorem~\ref{thm:lerche theorem 4.1.}.
\end{lemma}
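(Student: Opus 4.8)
The plan is to verify, for $\psi_a(u)=q(1/a)\sqrt a\,h(u/a)=q(1/a)\sqrt{2u\log\log(a/u)}$, the three hypotheses of Theorem~\ref{thm:lerche theorem 4.1.} with $t_1=1$ (the value against which the crossing time is compared in the proof of Theorem~\ref{thm:bm with o(1)}), together with the standing requirement that $\psi_a$ be positive, increasing and continuously differentiable. Throughout I restrict to $a\geq a_0$ for a sufficiently large constant~$a_0$, which loses nothing since the conclusion~\eqref{eq:lerche dens} and its use in the proof of Theorem~\ref{thm:bm with o(1)} only concern $a\nearrow\infty$. For $u\in(0,1)$ and $a\geq a_0$ the quantity $\log(a/u)$ exceeds $\log a_0$, which we take large; in particular $\log\log(a/u)>0$, so $\psi_a$ is positive and $C^1$ on $(0,1)$. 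Writing $L:=\log(a/u)$, differentiation gives
\[
  \psi_a'(u)=\frac{q(1/a)}{\sqrt{2u}}\,g(L),\qquad
  g(L):=\sqrt{\log L}-\frac{1}{L\sqrt{\log L}},
\]
and since $L\geq\log a_0$ is large we have $g(L)>0$, so $\psi_a$ is also increasing.

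For the convergence hypothesis (i), I would read the chain of identities in the proof of Theorem~\ref{thm:bm with o(1)} backwards to get $P(T_a<1)=P\bigl(\sup_{0<u<1/a}W_u/h(u)\geq q(1/a)\bigr)$. Since $q(1/a)\to\sqrt{1+\varepsilon}$, for large $a$ we have $q(1/a)\geq\sqrt{1+\varepsilon/2}$, and hence $P(T_a<1)\leq P\bigl(\sup_{0<u<1/a}W_u/h(u)\geq\sqrt{1+\varepsilon/2}\bigr)$; the events on the right decrease, as $a\nearrow\infty$, to $\bigl\{\limsup_{t\searrow0}W_t/h(t)\geq\sqrt{1+\varepsilon/2}\bigr\}$, which is a null set by the law of the iterated logarithm, so $P(T_a<1)\to0$. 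For the monotonicity hypothesis (ii) I would take $\alpha=3/4$: then $\psi_a(u)/u^{3/4}=q(1/a)\sqrt2\,u^{-1/4}\bigl(\log\log(a/u)\bigr)^{1/2}$ is a product of two positive functions, each decreasing in $u$ on $(0,1)$ for $a\geq a_0$, hence itself decreasing.

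The bulk of the work — and the only real obstacle — is the equicontinuity-type hypothesis (iii), the delicate point being that the modulus of continuity must be uniform in~$a$. From the formula for $\psi_a'$ one has $\tfrac{d}{du}\log\psi_a'(u)=-\tfrac1u\bigl(\tfrac12+(g'/g)(\log(a/u))\bigr)$. A short computation shows $(g'/g)(L)=\tfrac{1}{2L\log L}(1+o(1))\to0$ as $L\to\infty$; fixing $a_0$ large enough that $g$ is positive and $C^1$ on $[\log a_0,\infty)$, the function $g'/g$ is continuous there with limit~$0$ at infinity, hence bounded by some constant~$C_0$. Since $\log(a/u)\geq\log a_0$ for all $a\geq a_0$ and $u\in(0,1)$, integrating the last display yields
\[
  \Bigl|\log\frac{\psi_a'(s)}{\psi_a'(u)}\Bigr|
  \leq\Bigl(\tfrac12+C_0\Bigr)\Bigl|\log\frac su\Bigr|
  \qquad\text{for all }s,u\in(0,1),\ a\geq a_0.
\]
Given $\eta>0$ (in the role of $\varepsilon$ in hypothesis (iii)), one then picks $\delta>0$ so small that $|s/u-1|<\delta$ forces the right-hand side below $\log(1+\eta)$, which gives $|\psi_a'(s)/\psi_a'(u)-1|<\eta$ uniformly in~$a$. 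This completes the verification.
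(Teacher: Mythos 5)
Your verification is correct and complete on all three hypotheses (and you also check the standing requirement that $\psi_a$ be positive, increasing and $C^1$, which the paper leaves implicit). The genuine difference is in hypothesis~(i): you undo the Brownian scaling, rewriting $P(T_a<1)$ as $P\bigl(\sup_{0<u<1/a}W_u/h(u)\geq q(1/a)\bigr)$, bound $q(1/a)$ below by $\sqrt{1+\varepsilon/2}$, and let the decreasing events shrink to the null set $\{\lim_{t\searrow0}\sup_{0<u<t}W_u/h(u)\geq\sqrt{1+\varepsilon/2}\}$, so the law of the iterated logarithm plus continuity from above finishes the job; this is legitimate, since the chain of identities you read backwards is pure scaling, established independently of the lemma, so there is no circularity. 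The paper instead stays in the rescaled variables in which $T_a$ is defined and argues on $(0,1]$ with a splitting point $s_0$: on $(0,s_0]$ it combines the LIL with the monotonicity $\log\log(a/s)\geq\log\log(a_0/s)$, and on $[s_0,1]$ it plays the pathwise boundedness of $W_s/\sqrt{2s}$ against the uniformly divergent factor $\sqrt{\log\log(a/s)}$. Your route is shorter and avoids the two-regime argument; the paper's is self-contained in the $T_a$ picture. For hypotheses~(ii) and~(iii) you prove what the paper merely asserts: the paper notes that $h(u)/u^\alpha$ decreases for small $u$ for any $\alpha\in(\tfrac12,1)$ (your $\alpha=3/4$ factorization is the same observation) and dismisses~(iii) with the slow-variation remark $\log t\sim\log T$, whereas your uniform bound $\bigl|\frac{d}{du}\log\psi_a'(u)\bigr|\leq(\tfrac12+C_0)/u$ for all $a\geq a_0$ yields an explicitly $a$-uniform modulus for $\log\psi_a'$, which is exactly the delicate point of~(iii); that extra detail is a strengthening, not a deviation, of the paper's argument.
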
	
\begin{proof}
    To verify condition \eqref{it:lerche -- monotonicity} of Theorem~\ref{thm:lerche theorem 4.1.},
	it suffices to note that $ h(u) / u^{\alpha} $ decreases for small~$u$ and $\alpha \in (\tfrac12,1)$.
	The continuity condition \eqref{it:lerche -- continuity} easily follows from
	\begin{align*}
		\log(t) \sim \log(T), \quad t / T \nearrow 1, \quad t, T \nearrow \infty.
	\end{align*}
    It remains to show condition~\eqref{it:lerche -- convergence}, i.e., that
	\begin{align}%\label{eq:lerche P(T_a < 1) --> 0}
		P(T_a < 1)
		&= P\left(\inf \left\{ s > 0 : W'_s \geq q(1/a) 
		\sqrt{a} h(s/a) \right\} < 1\right) \notag\\
		&= P\left( \sup_{0 < s \leq 1} \frac{W_s'}{\sqrt{2 s \log\log\frac{s}{a}}} \geq q(1/a) \right)
		 \label{eq:Ta}
	\end{align}
	converges to zero as $a\nearrow \infty$.
	%The supremum in the probability converges to $ 0 $ almost surely, which lets this probability get zero for $ a \nearrow \infty $ (which corresponds to $ t \searrow 0 $).
	Choose $ a_0 >0$ such that
	\begin{align}\label{eq:p est}
		q(1/a) \geq \sqrt{1 + \frac{2}{3}\varepsilon}, \quad a\geq a_0.
	\end{align}
	By the law of the iterated logarithm for Brownian motion, we have
	\begin{align*}%\label{eq:lil Q}
	  \lim_{s_0\searrow 0} \sup_{0 < s \leq s_0} \frac{|W'_s|}{\sqrt{2 s \log\log \frac{a_0}{s}}} = 1
	  \quad \text{a.s.}
	\end{align*}
	{}From this we get that there exists an $ s_0 >0$ such that
	\begin{align*}
		\sup_{0 < s \leq s_0} \frac{|W'_s|}{\sqrt{2 s \log\log \frac{a_0}{s}}}
		\leq \sqrt{1 + \frac{1}{2} \varepsilon} \quad \text{a.s.}
	\end{align*}
	By monotonicity w.r.t.~$a$, we obtain
	\begin{align}
		\frac{|W'_s|}{\sqrt{2 s \log\log \frac{a}{s}}}
		\leq \frac{|W'_s|}{\sqrt{2 s \log\log \frac{a_0}{s}}}
		\leq \sqrt{1 + \frac{1}{2} \varepsilon}, \quad  a \geq a_0,\ s\in(0,s_0]\quad \text{a.s.}
		 \label{eq:12eps}
		%< 1 + \frac{2}{3} \varepsilon
		%\leq q(1/a).
	\end{align}
	%If $ s_0 \geq 1 $ we are fine, because~\eqref{eq:Ta} vanishes for large~$a$.
	For $ s \in [s_0, 1] $, note that the first factor of
	\begin{align*}
		%\frac{W_s}{\sqrt{2 s \log\log\frac{a}{s}}}
		\frac{W_s}{\sqrt{2s}} \cdot \frac{1}{\sqrt{\log\log\frac{a}{s}}}
	\end{align*}
	 is bounded pathwise, and that the second factor satisfies
	\begin{align*}
    	\frac{1}{\sqrt{\log\log\frac{a}{s}}}
    	=\frac{1}{\sqrt{\log\log a + o(1)}}
		\to 0, \quad a \nearrow \infty,
	\end{align*}
	uniformly on $[s_0,1]$. From this and~\eqref{eq:12eps}, we get
	\begin{align*}
	  \limsup_{a\nearrow \infty} \sup_{0 < s \leq 1}
	     \frac{W_s'}{\sqrt{2 s \log\log\frac{s}{a}}}
	     \leq \sqrt{1 + \frac{1}{2} \varepsilon},
	\end{align*}
	and together with~\eqref{eq:p est} this implies that~\eqref{eq:Ta} converges to zero.
\end{proof}

\section{It{\^o} diffusions}\label{se:main proofs}

We now show that our results about It{\^o} diffusions can be 
reduced to the case of Brownian motion, which was handled
in the preceding section.
The drift of~$X$ can be easily controlled by continuity and part \eqref{it:ld assumption for s}
of Assumption~\ref{thm:assumptions}. Define
\begin{align}\label{eq:def D}
	D_{t}
	:= \sup_{0 < u < t} \frac{|\int_{0}^{u} b(X_v, v)\, dv|}{h(u)}.
\end{align}

\begin{lemma}\label{le:drift}
   Under parts \eqref{it:sde}--\eqref{it:ld assumption for s}
of	Assumption~\ref{thm:assumptions}, we have
   	\begin{align}\label{eq:lemma 2}
   	   P\left( D_t > \sqrt{t} \right) %\leq e^{-I(1) / t  (1 + o(1))}, \quad t\searrow0.
   	   = O(t^{c_2}), \quad t\searrow0.
	\end{align}
\end{lemma}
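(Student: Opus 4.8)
The plan is to split according to whether the diffusion has left a fixed compact interval. Put $A_t := \{\sup_{0<u<t}|X_u| < c_1\}$ with $c_1$ as in~\eqref{eq:large deviation assumtion}. On the event $A_t$, every value $X_v$ entering $\int_0^u b(X_v,v)\,dv$ for $u\in(0,t)$ lies in $[-c_1,c_1]$, while $v$ lies in $[0,1]$ as soon as $t\le 1$. Since $b$ is continuous (Assumption~\ref{thm:assumptions}\eqref{it:cont}), the constant $M:=\sup\{|b(x,v)| : |x|\le c_1,\ 0\le v\le 1\}$ is finite, and therefore $\big|\int_0^u b(X_v,v)\,dv\big|\le Mu$ for all $u\in(0,t)$ on $A_t$.

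First I would use this pathwise bound to estimate, on $A_t$ and for $t<1/e$,
\[
  D_t \;\le\; \sup_{0<u<t}\frac{Mu}{h(u)} \;=\; M\sup_{0<u<t}\sqrt{\frac{u}{2\log\log(1/u)}}.
\]
Next I would observe that $u\mapsto u/h(u)$ is increasing for small $u$ — an elementary computation, the relevant feature being that $\log\log(1/u)$ varies so slowly that $u/\log\log(1/u)$ is essentially linear near $0$ — so that the supremum above is attained as $u\to t^-$ and equals $M\sqrt{t/(2\log\log(1/t))}$. Consequently, on $A_t$,
\[
  D_t \;\le\; \frac{M}{\sqrt{2\log\log(1/t)}}\,\sqrt{t} \;\le\; \sqrt{t}
\]
as soon as $t$ is small enough that $\log\log(1/t)\ge M^2/2$.

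Combining the two displays, for all sufficiently small $t$ one has the inclusion $\{D_t>\sqrt t\}\subseteq A_t^{\,c}=\{\sup_{0<u<t}|X_u|\ge c_1\}$, and then~\eqref{eq:lemma 2} follows immediately from part~\eqref{it:ld assumption for s} of Assumption~\ref{thm:assumptions}, i.e.\ from~\eqref{eq:large deviation assumtion}. The only mildly delicate point is the monotonicity of $u\mapsto u/h(u)$ near $0$, which is exactly what allows the pointwise bound $Mu/h(u)$ to be replaced by a bound depending on $t$ alone; everything else is just continuity of $b$ together with the hypothesised weak large-deviations estimate, so I do not expect any serious obstacle here.
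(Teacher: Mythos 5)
Your argument is correct and is essentially identical to the paper's proof: bound $b$ on the compact set $[-c_1,c_1]\times[0,t]$ by a constant, note that on $\{\sup_{0<u<t}|X_u|<c_1\}$ this gives $D_t\le \mathit{const}\cdot\sqrt{t/(2\log\log(1/t))}\le\sqrt t$ for small $t$, and conclude via~\eqref{eq:large deviation assumtion}. The monotonicity of $u\mapsto u/h(u)$ you flag is immediate (both $\sqrt u$ and $1/\sqrt{\log\log(1/u)}$ increase), so there is no delicate point.
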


\begin{proof}
   The continuous function~$b$ is bounded by some constant $c$ on $[-c_1,c_1]\times [0,t],$
   independently of~$t$ for~$t$ small enough. Therefore, if $\sup_{0 < u < t} |X_u| < c_1,$ then
   \[
     D_t \leq \sup_{0<u<t} \frac{cu}{h(u)} = c \sqrt{\frac{t}{2\log \log \frac1t}},
   \]
   which implies
   \[
     P\left( D_t > \sqrt{t}, \sup_{0 < u < t} |X_u| < c_1 \right) = 0,\quad t \text{ small.}
   \]
   The assertion thus follows from~\eqref{eq:large deviation assumtion}.
\end{proof}

Note that the decay rate in~\eqref{eq:lemma 2} is $e^{-c_2 \log(1/t)}$, and thus
negligible in comparison to~\eqref{eq:est intro}.
The next step in the proof of Theorem~\ref{thm:main eps} is contained in 
Lemma~\ref{le:tc bm}, which allows us to deal with the local martingale part, after expressing it as a time-changed
Brownian motion. We will require the following well-known result:

\begin{theorem}[L\'evy modulus of continuity, Theorem 2.9.25 in~\cite{KaSh91}]\label{thm:levy}
  For $f(\delta):= \sqrt{2\delta \log(1/\delta)},$ we have
  \[
    P\left( \limsup_{\delta\searrow0} \frac{1}{f(\delta)}
      \max_{\substack{0\leq s < t \leq 1\\ |t-s|\leq\delta}}
      |W_t-W_s|=1\right) = 1.
  \]
\end{theorem}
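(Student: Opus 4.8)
The plan is to establish the two almost-sure inequalities
\[
  \limsup_{\delta\searrow0} \frac{1}{f(\delta)} \max_{\substack{0\le s<t\le1\\|t-s|\le\delta}} |W_t-W_s| \le 1
  \quad\text{and}\quad
  \limsup_{\delta\searrow0} \frac{1}{f(\delta)} \max_{\substack{0\le s<t\le1\\|t-s|\le\delta}} |W_t-W_s| \ge 1
\]
separately; together they give the claimed equality. Fix $\varepsilon\in(0,1)$ and work along the dyadic scales $\delta_n=2^{-n}$, using that $f$ is increasing near $0$ so that controlling the oscillation at these scales and interpolating suffices.

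\emph{Upper bound.} For each $n$ consider the increments $\xi_{n,k}:=W_{(k+1)2^{-n}}-W_{k2^{-n}}$, $k=0,\dots,2^n-1$, which are i.i.d.\ $N(0,2^{-n})$. The Gaussian tail bound $P(|Z|>\lambda)\le e^{-\lambda^2/2}$ gives
\[
  P\Big(\max_{0\le k<2^n}|\xi_{n,k}|>(1+\varepsilon)f(2^{-n})\Big)\le 2^n\exp\!\big(-(1+\varepsilon)^2 n\log 2\big)=2^{n(1-(1+\varepsilon)^2)},
\]
which is summable, so by Borel--Cantelli only finitely many $n$ violate $\max_k|\xi_{n,k}|\le(1+\varepsilon)f(2^{-n})$ a.s. To pass from grid increments to a general increment $W_t-W_s$ with $0\le t-s\le\delta$, where $2^{-n}\le\delta<2^{-n+1}$, one telescopes $W_t-W_s$ over the binary expansions of $s$ and $t$: it is a sum, over scales $m\ge n$, of a uniformly bounded number of grid increments $\xi_{m,\cdot}$, each bounded by $(1+\varepsilon)f(2^{-m})$ for large $n$. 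Since $f(2^{-m})\sim\sqrt{2m\log2}\,2^{-m/2}$, the tail sum $\sum_{m\ge n}f(2^{-m})$ is at most a universal constant times $f(2^{-n})$, hence at most a constant times $f(\delta)$; a book-keeping of the constants yields $|W_t-W_s|\le(1+\varepsilon')f(\delta)$ for all small $\delta$ a.s., with $\varepsilon'\to0$ as $\varepsilon\to0$, and letting $\varepsilon\searrow0$ gives the upper bound.

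\emph{Lower bound.} Here one scale at a time suffices, together with independence. For fixed $n$ the events $\{|\xi_{n,k}|>(1-\varepsilon)f(2^{-n})\}$ are independent, so
\[
  P\Big(\max_{0\le k<2^n}|\xi_{n,k}|\le(1-\varepsilon)f(2^{-n})\Big)=\big(1-P(|Z|>(1-\varepsilon)\sqrt{2n\log2})\big)^{2^n}\le\exp\!\big(-2^n P(|Z|>(1-\varepsilon)\sqrt{2n\log2})\big).
\]
The lower Gaussian tail estimate $P(Z>\lambda)\ge\frac{\lambda}{\lambda^2+1}n(\lambda)$ shows $2^nP(|Z|>(1-\varepsilon)\sqrt{2n\log2})$ grows like $n^{-1/2}2^{n(1-(1-\varepsilon)^2)}\to\infty$, so the probabilities above are summable in $n$. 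Borel--Cantelli then gives that a.s., for all large $n$, some dyadic increment of length $2^{-n}$ exceeds $(1-\varepsilon)f(2^{-n})$; since $\delta=2^{-n}$ is admissible in the maximum, $\limsup_{\delta\searrow0}f(\delta)^{-1}\max_{|t-s|\le\delta}|W_t-W_s|\ge1-\varepsilon$, and $\varepsilon\searrow0$ concludes.

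The main obstacle is the chaining step in the upper bound: one must organize the telescoping of $W_t-W_s$ over dyadic scales so that the accumulated contribution from the entire tail $\sum_{m\ge n}f(2^{-m})$ remains a $(1+o(1))$ multiple of $f(\delta)$ rather than inflating the constant. This hinges on $\sum_m f(2^{-m})$ being comparable to its leading term, but pinning down the combinatorics — how many grid increments of each scale occur, and that their number stays bounded uniformly in $n$ — is the delicate point; the other estimates are routine Gaussian tail bounds plus Borel--Cantelli.
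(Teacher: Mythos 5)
This statement is not proved in the paper at all: it is Lévy's modulus-of-continuity theorem, quoted verbatim from Karatzas--Shreve (Theorem 2.9.25) and used only as an input in Lemma~\ref{le:tc bm}, so there is no in-paper argument to compare against; what matters is whether your proof of the classical result is complete. Your lower bound is fine: independence of the $2^n$ adjacent dyadic increments, the lower Gaussian tail estimate, and Borel--Cantelli give $\limsup\geq 1-\varepsilon$ along $\delta=2^{-n}$, and monotonicity in $\delta$ is not even needed there.

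The upper bound, however, has a genuine gap, and it is exactly the one you flag at the end without closing it. If at every scale $m\geq n$ you only control the \emph{adjacent} grid increments by $(1+\varepsilon)f(2^{-m})$ and then telescope $W_t-W_s$ over the binary expansions of $s$ and $t$, you pick up (up to) two increments per scale, and the resulting bound is of order
\[
2(1+\varepsilon)\sum_{m\geq n} f(2^{-m})\;\approx\;(1+\varepsilon)\Bigl(2\sum_{j\geq 0}2^{-j/2}\Bigr)f(2^{-n})\;\approx\;6.8\,(1+\varepsilon)\,f(\delta),
\]
a constant bounded away from $1$ that does \emph{not} shrink as $\varepsilon\searrow 0$; so the claimed ``book-keeping yields $(1+\varepsilon')f(\delta)$ with $\varepsilon'\to0$'' is false for this decomposition, and the sharp constant $1$ in the theorem is precisely what this argument cannot reach. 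The standard fix requires a different event family: for each $n$ apply the union bound to \emph{all} pairs of level-$n$ dyadic points at distance at most $2^{\varepsilon n}$ grid steps (about $2^{(1+\varepsilon)n}$ pairs, each increment bounded by $(1+\varepsilon)f(\text{distance})$, still summable since the tail probability is of order $(\text{distance})^{(1+\varepsilon)^2}$), so that for a general $s<t$ with $t-s\le\delta$ the bulk of the increment is captured by a \emph{single} long dyadic increment of length comparable to $\delta$, and the telescoping is confined to scales much finer than $\delta$ (of order $\delta^{1+\varepsilon}$ and smaller), where $\sum_m f(2^{-m})=o(f(\delta))$. Without this (or an equivalent device, as in Karatzas--Shreve or M\"orters--Peres), your argument proves a modulus of continuity with some absolute constant in place of $1$, but not the theorem as stated; note also that even the paper's application~\eqref{eq:from levy} needs the constant $\sqrt{3/2}\cdot$, which your $\approx 6.8$ would not deliver.
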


\begin{lemma}\label{le:tc bm}
  Suppose that parts \eqref{it:sde}--\eqref{it:ld assumption for s}
of	Assumption~\ref{thm:assumptions} hold.
  Let $\widehat{W}$ be a standard Brownian motion, and $d(t)$ a deterministic function
  satisfying $d(t)=o(1)$ as $t\searrow0$. Then
  \begin{align}
    P\left(\sup_{0 < u < t} \frac{\big|\widehat{W}_{{\langle X \rangle}_u}\big|}{h(u)}  \geq \sigma_0 \sqrt{1 + \varepsilon}+d(t) \right)
    &=e^{- \varepsilon (\log\log\frac{1}{t})(1 + o(1))}, \label{eq:with abs} \\
    P\left(\sup_{0 < u < t} \frac{\widehat{W}_{{\langle X \rangle}_u}}{h(u)} 
     \geq \sigma_0\sqrt{1 + \varepsilon } + d(t)\right)
    &=e^{- \varepsilon(\log\log\frac{1}{t})(1 + o(1))}, \quad t\searrow 0.
    \label{eq:without abs}
  \end{align}
\end{lemma}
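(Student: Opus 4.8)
The plan is to reduce~\eqref{eq:with abs} and~\eqref{eq:without abs} to the Brownian statement of Theorem~\ref{thm:bm with o(1)} by controlling the quadratic variation clock $\langle X\rangle_u=\int_0^u \sigma(X_v,v)^2\,dv$. First I would observe that by continuity of~$\sigma$ and part~\eqref{it:cont} of the Assumption, on the event $\{\sup_{0<u<t}|X_u|<c_1\}$ the integrand $\sigma(X_v,v)^2$ lies in a small neighbourhood of $\sigma_0^2$ for~$t$ small; more precisely, for any $\eta>0$ one can choose $c_1$ (shrinking it if necessary) and then $t$ small so that $\sigma_0^2(1-\eta)\le \sigma(X_v,v)^2\le\sigma_0^2(1+\eta)$ for all $v\in(0,t)$ on that event. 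Consequently $\sigma_0^2(1-\eta)u\le\langle X\rangle_u\le\sigma_0^2(1+\eta)u$ on $\{\sup|X_u|<c_1\}$, and by part~\eqref{it:ld assumption for s} the complementary event has probability $O(t^{c_2})$, which is negligible compared to the claimed $e^{-\varepsilon(\log\log\frac1t)(1+o(1))}$ decay (cf.\ the remark after Lemma~\ref{le:drift}). So up to an additive $O(t^{c_2})$ error we may work on the event where the clock is sandwiched between $\sigma_0^2(1\pm\eta)u$.

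Next I would use time inversion as in Section~\ref{se:bm}: for a Brownian motion $\widehat W$, the event $\{\sup_{0<u<t}\widehat W_{\langle X\rangle_u}/h(u)\ge \sigma_0\sqrt{1+\varepsilon}+d(t)\}$ is a boundary-crossing event. The upper bound follows by replacing $\langle X\rangle_u$ by $\sigma_0^2(1+\eta)u$ in the numerator time argument — since $\widehat W$ is not monotone this needs a little care, but it can be handled by writing $\sup_u \widehat W_{\langle X\rangle_u}\le \sup_{0<s\le\sigma_0^2(1+\eta)t}\widehat W_s$ and then, using $h(u)\ge \mathrm{const}\cdot h(\langle X\rangle_u/(\sigma_0^2(1+\eta)))$ for small~$u$ (monotonicity of $h(u)/u^{1/2}$ up to the slowly varying $\log\log$ factor, which only contributes to the $d(t)$ term), comparing to $\sup_{0<s<\sigma_0^2(1+\eta)t}\widehat W_s/h(s/(\sigma_0^2(1+\eta)))$. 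After Brownian scaling $\widehat W_{\sigma_0^2(1+\eta)s'}\stackrel{d}{=}\sigma_0\sqrt{1+\eta}\,W'_{s'}$ this becomes a probability of the form $P(\sup_{0<u<\sigma_0^2(1+\eta)t} W'_u/h(u)\ge \sqrt{(1+\varepsilon)/(1+\eta)}+o(1))$, to which Theorem~\ref{thm:bm with o(1)} applies, giving decay $e^{-\varepsilon'(\log\log\frac1t)(1+o(1))}$ with $\varepsilon'=\frac{1+\varepsilon}{1+\eta}-1$. For the lower bound one argues symmetrically with $1-\eta$ in place of $1+\eta$, intersecting with the event $\{\sup|X_u|<c_1\}$ (whose probability tends to~$1$, hence does not affect the logarithmic rate). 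Letting $\eta\searrow0$ at the end and noting $\varepsilon'\to\varepsilon$ yields~\eqref{eq:without abs}; the same reasoning applied to $|\widehat W|$, using the two-sided bound and a union bound over the positive and negative parts (which at most doubles the probability and is absorbed into the $1+o(1)$), gives~\eqref{eq:with abs}.

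The main obstacle I anticipate is the non-monotonicity of $\widehat W$, which prevents a one-line substitution of the clock bounds into the boundary-crossing event: replacing $\langle X\rangle_u$ by $\sigma_0^2(1\pm\eta)u$ inside $\widehat W_{\langle X\rangle_u}$ is not order-preserving. The clean way around this is to pass through the running maximum $u\mapsto\sup_{s\le u}\widehat W_s$, which \emph{is} monotone and whose distribution is controlled by the reflection principle; combined with the $\log\log$-slowly-varying behaviour of $h$ this lets one sandwich the whole probability between two Brownian quantities of the type treated in Theorem~\ref{thm:bm with o(1)}, at the cost of an $\varepsilon$-distortion that vanishes as $\eta\searrow0$. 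A secondary technical point is checking that the distortion of the $\log\log$ factor arising from the time change ($\log\log(1/u)$ versus $\log\log(1/(\sigma_0^2(1\pm\eta)u))$) is genuinely $o(1)$ uniformly in the relevant range of~$u$, so that it can be folded into the deterministic perturbation $d(t)$; this is the same observation already used in the proof of Theorem~\ref{thm:bm with o(1)}.
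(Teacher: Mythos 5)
Your strategy is sound and leads to the stated asymptotics, but it is a genuinely different route from the paper's. You control the clock \emph{multiplicatively}, $\sigma_0^2(1-\eta)u\le\langle X\rangle_u\le\sigma_0^2(1+\eta)u$ on $\{\sup_{0<u<t}|X_u|<c_1\}$, use that only positive levels matter (so comparing $h(u)$ with $h$ of the rescaled clock time is order-preserving, the continuity of $u\mapsto\langle X\rangle_u$ giving the time-transfer in the lower bound), reduce by Brownian scaling to Theorem~\ref{thm:bm with o(1)} with the distorted exponent $\varepsilon'=\frac{1+\varepsilon}{1\pm\eta}-1$, and let $\eta\searrow0$ at the end. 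The paper instead compares \emph{paths additively}: from $|\langle X\rangle_u-\sigma_0^2u|\le u\,g(u)$ and L\'evy's modulus of continuity (Theorem~\ref{thm:levy}) it gets $\sup_{0<u<t}\bigl|\widehat{W}_{\langle X\rangle_u}-\widehat{W}_{\sigma_0^2u}\bigr|/h(u)=r(t)=o(1)$, absorbs $r(t)$ into the deterministic perturbation $d(t)$ allowed in Theorem~\ref{thm:bm with o(1)}, and concludes with the exact exponent $\varepsilon$ in one pass --- no $\eta$-limit, and no need for your running-maximum detour around the non-monotonicity of $\widehat{W}$, since the whole clock perturbation is converted into an $o(1)$ shift of the boundary. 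Both arguments discard the complement of $\{\sup_{0<u<t}|X_u|<c_1\}$ via \eqref{eq:large deviation assumtion}, whose $O(t^{c_2})$ decay is negligible against $(\log\frac1t)^{-\varepsilon}$, and both exploit precisely the $d(t)$-flexibility for which Theorem~\ref{thm:bm with o(1)} was formulated.

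One step in your write-up deserves more care: to obtain the sandwich for \emph{arbitrarily small} $\eta$ you ``shrink $c_1$ if necessary,'' but Assumption~\ref{thm:assumptions}\eqref{it:ld assumption for s} provides the $O(t^{c_2})$ tail bound only for the given $c_1$; it does not transfer automatically to a smaller radius, and for the upper bounds a mere $o(1)$ bound on the bad event would not suffice. This is fixable (on $\{\sup_{0<u<t}|X_u|<c_1\}$ the coefficients are bounded, so the drift is $O(t)$ and the martingale part is a time-changed Brownian motion with clock $O(t)$, giving a Gaussian tail for exceeding any smaller radius), but as stated it is an unjustified appeal to \eqref{eq:large deviation assumtion}. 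To be fair, the paper's proof leans on the same point: its claim $g(u)=o(1)$ for fixed $c_1$ implicitly requires $\sigma^2(\cdot,0)\equiv\sigma_0^2$ on $[-c_1,c_1]$ or a sufficiently small $c_1$, whereas your multiplicative scheme genuinely needs $\eta$ arbitrarily small, so you should spell the reduction out.
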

\begin{proof}
   By~\eqref{eq:large deviation assumtion}, we may assume $\sup_{0 < u < t} |X_u| < c_1.$
   Define 
	\begin{align*}
		g(u) := \sup_{\substack{|x| \leq c_1\\ s < u}}\left| \sigma^2(x,s) - \sigma_0^2 \right| = o(1), \quad u \searrow 0.
	\end{align*}
	Since
	\begin{align*}
		{\langle X \rangle}_u
		= \int_{0}^{u} \sigma^2(X_v, v)\, dv,
	\end{align*}
	we have
	\begin{align}\label{eq:from mvt}
	   \left| \langle X \rangle_u -  \sigma_0^2 u \right| \leq u g(u)
	\end{align}
	by the mean value theorem.
	%, and thus
	%\begin{align}\label{eq:from mvt}
	%  P\left(\sup_{0<u<t} \left| \langle X \rangle_u -  \sigma_0^2 u \right| \geq t \sqrt{g(t)},\
	%   \sup_{0 < u < t} |X_u| < 1 \right) = 0, \quad t \text{ small.}
	%\end{align}
	For arbitrary $u>0$ and small $\delta>0$, we have
	\begin{align}\label{eq:from levy}
      \max_{\substack{0\leq x < y \leq u\\ |y-x|\leq\delta u}}
      |\widehat{W}_y-\widehat{W}_x| \leq \sqrt{u} \sqrt{3\delta \log(1/\delta)}
    \end{align}
	by Theorem~\ref{thm:levy}
	and Brownian scaling. From~\eqref{eq:from mvt} and~\eqref{eq:from levy},
	we obtain
	\begin{align}
		\sup_{0 < u < t} \frac{\big|\widehat{W}_{{\langle X \rangle}_u} - \widehat{W}_{\sigma_0^2 u}\big|}{h(u)}
		\leq   \sup_{0<u < t} \frac{\sqrt{u}\sqrt{3 g(u) \log\frac{1}{g(u)}}}{h(u)}
		=: r(t)
		= o(1), \quad t \searrow 0, \label{eq:use g}
	\end{align}
	on the event $\sup_{0 < u < t} |X_u| < c_1.$ This implies
	\begin{align}
      P\Bigg(\sup_{0 < u < t} \frac{\big|\widehat{W}_{{\langle X \rangle}_u}\big|}{h(u)}
          \geq & \sigma_0\sqrt{1 + \varepsilon} +d(t),\ \sup_{0 < u < t} |X_u| < c_1\Bigg) \notag\\
      &\leq
      P\left( \sup_{0 < u < t} \frac{\big| \widehat{W}_{\sigma_0^2 u} \big|}{h(u)} \geq \sigma_0 \sqrt{1 + \varepsilon} +d(t) - r(t)\right) \notag\\
      &=
       P\left( \sup_{0 < u < t} \frac{\big| \widetilde{W}_{u} \big|}{h(u)} \geq  \sqrt{1 + \varepsilon} + \frac{d(t) - r(t)}{\sigma_0}\right) \notag  \\
      &\leq 2  P\left( \sup_{0 < u < t} \frac{ \widetilde{W}_{u} }{h(u)} \geq  \sqrt{1 + \varepsilon} + \frac{d(t) - r(t)}{\sigma_0}\right).\notag
    \end{align}
    where $\widetilde{W}$ is again a Brownian motion.
    Now the upper estimate in~\eqref{eq:with abs} follows from Theorem~\ref{thm:bm with o(1)}.
	 To complete the proof of the lemma, a lower estimate for the left hand side of~\eqref{eq:without abs} is needed. We have
	\begin{align*}
	  \sup_{0 < u < t} \frac{\widehat{W}_{{\langle X \rangle}_u}}{h(u)}
	  \geq
	  \sup_{0 < u < t} \frac{\widehat{W}_{\sigma_0^2 u}}{h(u)}
	  -\sup_{0 < u < t} \frac{\big|\widehat{W}_{{\langle X \rangle}_u} - \widehat{W}_{\sigma_0^2 u}\big|}{h(u)},
	\end{align*}
	and thus, by~\eqref{eq:use g},
	\begin{align}
	   P\Bigg(\sup_{0 < u < t} &\frac{\widehat{W}_{{\langle X \rangle}_u}}{h(u)} 
        \geq \sigma_0\sqrt{1 + \varepsilon} +d(t),\ \sup_{0 < u < t} |X_u| < c_1\Bigg) \notag\\
       &\geq
       P\left(\sup_{0 < u < t} \frac{\widehat{W}_{\sigma_0^2 u}}{h(u)} 
        \geq \sigma_0\sqrt{1 + \varepsilon} + d(t)+ r(t),\ \sup_{0 < u < t} |X_u| < c_1 \right) \notag\\
       &\geq
       P\left(\sup_{0 < u < t} \frac{\widetilde{W}_{u}}{h(u)} 
        \geq \sqrt{1 + \varepsilon} + \frac{d(t)+r(t)}{\sigma_0}\right)
          - P\left( \sup_{0 < u < t} |X_u| \geq c_1\right), \label{eq:lower est split}
    \end{align}
    using  $P(A \cap B) \geq P(A) - P(B^c)$. The first probability in~\eqref{eq:lower est split}
    can be estimated by Theorem~\ref{thm:bm with o(1)},
    and the second probability in~\eqref{eq:lower est split}
    is asymptotically smaller by~\eqref{eq:large deviation assumtion}.
\end{proof}

We now conclude the paper by proving our main results,
Theorem~\ref{thm:main eps} and its consequence,
Theorem~\ref{thm:main result}.

\begin{proof}[Proof of Theorem~\ref{thm:main eps}]
   Recalling the definition of~$D_t$ in~\eqref{eq:def D}, we have  
	\begin{align}\label{eq:upper estimate -- first computations}
		P\left(  \sup_{0 < u < t} \frac{X_u}{h(u)} \geq \sigma_0\sqrt{1 + \varepsilon } \right)
		& \leq P\left(  \sup_{0 < u < t} \frac{\left|\int_{0}^{u}\sigma(X_v, v)\, dW_v \right|}{h(u)} + D_t  \geq \sigma_0\sqrt{1 + \varepsilon }   \right).
	\end{align}
	By the Dambis-Dubins-Schwarz theorem (Theorem~3.4.6 and Problem~3.4.7 in~\cite{KaSh91}),
	the local martingale can be written as
	\begin{align}\label{eq:time change}
	  \int_{0}^{u}\sigma(X_v, v)\, dW_v = \widehat{W}_{{\langle X \rangle}_u}
	\end{align}
	with a Brownian motion $\widehat W$.
	The upper estimate thus follows from applying~\eqref{eq:large deviation assumtion}, Lemma~\ref{le:drift},
	and~\eqref{eq:with abs} to~\eqref{eq:upper estimate -- first computations}.	
 We proceed with the lower estimate in Theorem~\ref{thm:main eps}. From
	\begin{align*}
		\sup_{0 < u < t}\frac{X_u}{h(u)}
		\geq \sup_{0< u < t} \frac{\int_{0}^{u}\sigma(X_v,v)\, dW_v}{h(u)}
		- \sup_{0 < u < t}\frac{\left|\int_{0}^{u}b(X_v, v)\, dv\right|}{h(u)}
	\end{align*}
	and~\eqref{eq:time change}, we get
	\begin{align*}
		P\left(  \sup_{0 < u < t} \frac{X_u}{h(u)} \geq \sigma_0\sqrt{1 + \varepsilon}  \right)
		\geq P\left(  \sup_{0 <u < t}\frac{ \widehat{W}_{{\langle X \rangle}_u}}{h(u)} \geq \sigma_0\sqrt{1 + \varepsilon} + D_t \right).
	\end{align*}
	Since we need a lower bound, we can intersect with the event $D_t \leq \sqrt{t}.$
	Using $P(A \cap B) \geq P(A) - P(B^c),$ we obtain
	\begin{align*}
	   P\left(  \sup_{0 < u < t}\frac{ \widehat{W}_{{\langle X \rangle}_u}}{h(u)} \geq \sigma_0\sqrt{1 + \varepsilon} + D_t \right)
	   &\geq
		P\left(  \sup_{0 < u < t}\frac{ \widehat{W}_{{\langle X \rangle}_u}}{h(u)} \geq \sigma_0\sqrt{1 + \varepsilon} + \sqrt{t} ,\ D_t \leq \sqrt{t}\right) \\
	 &\geq	P\left(  \sup_{0 < u < t}\frac{ \widehat{W}_{{\langle X \rangle}_u}}{h(u)} \geq \sigma_0\sqrt{1 + \varepsilon} + \sqrt{t}\right)
		- P(D_t > \sqrt{t}).
	\end{align*}
	The lower estimate now follows from Lemma~\ref{le:drift} and~\eqref{eq:without abs}.
\end{proof}

\begin{proof}[Proof of Theorem~\ref{thm:main result}]
   The increasing process $\sup_{0<u<t}X_u/h(u)$ converges to $\sigma_0$ as $t\searrow0$
   by part~\eqref{it:lil} of Assumption~\ref{thm:assumptions}, and thus its values are $\geq \sigma_0$
   a.s. Hence, there are no lower deviations, and
  it suffices to consider subsets of $[\sigma_0,\infty).$
  First, let $O\neq\emptyset$ be open,
  %The case $\sigma_0\in O$ is trivial. Suppose now that $\sigma_0\notin O,$ and let
  and $\lambda>0$ be arbitrary. We can pick $x>1$ and $\delta>0$ such that
  \[
    \inf O < \sigma_0\sqrt{x-\delta} < \sigma_0\sqrt{x+ \delta} < \inf O + \lambda
  \]
  and
  \[
    \big(\sigma_0\sqrt{x-\delta},\sigma_0\sqrt{x+\delta}\big) \subseteq O.
  \]
  Then,
  \begin{align*}
     P\left( \sup_{0<u<t}\frac{X_u}{h(u)} \in O \right)
     &\geq P\left( \sup_{0<u<t}\frac{X_u}{h(u)} \in \big(\sigma_0\sqrt{x-\delta},\sigma_0\sqrt{x+\delta}\big) \right) \\
     &= P\left( \sup_{0<u<t}\frac{X_u}{h(u)} \geq \sigma_0\sqrt{x-\delta})\right)
     -P\left( \sup_{0<u<t}\frac{X_u}{h(u)}\geq \sigma_0\sqrt{x+\delta}\right) \\
     &= e^{-(x- \delta-1) (\log\log\frac{1}{t})(1 + o(1))},
  \end{align*}
  by Theorem~\ref{thm:main eps}. Therefore,
  \begin{align*}
     \liminf_{t \searrow 0} \frac{1}{\log\log\frac{1}{t}} \log P\left( \sup_{0 < u < t} \frac{X_u}{h(u)} \in O \right)
     &\geq -(x-\delta-1)\\
    &\geq - \Big(\frac{\inf O +\lambda}{\sigma_0}\Big)^2 +1
    = -J(O)  +O(\lambda).
  \end{align*}
  Then taking $\lambda \searrow0$ yields~\eqref{eq:ldp lower}.
   Now let $C$ be closed. Recall that we may assume $C\subseteq[\sigma_0,\infty).$
   If $\inf C =\sigma_0,$ then $J(C)=0$ as $C$ is closed, and it suffices to estimate the probability
   in~\eqref{eq:ldp upper} by~$1$. Otherwise, let $\sigma_0\sqrt{1+\kappa} := \inf C$
   with $\kappa>0$. Then, by Theorem~\ref{thm:main eps},
   \begin{align*}
     \limsup_{t \searrow 0} \frac{1}{\log\log\frac{1}{t}} \log
       P\left( \sup_{0<u<t}\frac{X_u}{h(u)} \in C \right) 
       &\leq \limsup_{t \searrow 0} \frac{1}{\log\log\frac{1}{t}} \log
       P\left( \sup_{0<u<t}\frac{X_u}{h(u)} \geq\sigma_0\sqrt{1+\kappa} \right) \\
       &= -\kappa = -J(C).
   \end{align*}
\end{proof}

\bibliographystyle{siam}
\bibliography{literature}

\end{document}